\theoremstyle{plain}
\newtheorem{theorem}{Theorem}[section]
\newtheorem{proposition}[theorem]{Proposition}
\theoremstyle{definition}
\newcommand{\Cee}{{\mathbb C}}
\newcommand{\fB}{\mathcal{B}}
\newcommand{\fH}{\mathcal{H}}
\newcommand{\fM}{\mathcal{M}}
\newcommand{\fU}{\mathcal{U}}
\begin{document}
 
 
\author{Nico Spronk}
\address{Department of Pure Mathematics, University of Waterloo, Waterloo, Ontario, N2L3G1,Canada}
\email{nspronk@uwaterloo.ca}

\title[Hulanicki's Theorem]{A short proof of Hulanicki's Theorem}

\begin{abstract}
We outline a simple proof of Hulanicki's theorem, that a locally compact group is amenable
if and only if the left regular representation weakly contains all unitary representations.
This combines some elements of the literature which have not appeared together, before.
\end{abstract}

\subjclass{Primary 43A07; Secondary 43A40, 43A35}

\keywords{amenable group, unitary representation, multiplicative domain}

\thanks{This research was partially supported by an NSERC Discovery Grant.}


 \date{\today}
 
 \maketitle
 

Let $G$ be a locally compact group.  For $p=1,2,\infty$ we let $L^p(G)$ denote the $L^p$-space with respect 
to the left Haar measure $m$ ($dx=dm(x)$).  For a unitary representation $\pi:\fU(\fH)$ (we assume 
continuous with respect to weak or strong operator topology on unitary operators $\fU(\fH)$), the integrated 
form is given by $\pi:L^1(G)\to\fB(\fH)$, $\pi(f)=\int_G f(x)\pi(x)\,dx$ (integral understood in weak operator 
sense), and is well-known to be contractive and satisfy $\pi(f^*)=\pi(f)^*$.
The left regular representation $\lambda:G\to\fU(L^2(G))$ is
given by $\lambda(x)f=x\ast f$, where $x\ast f(y)=f(x^{-1}y)$ for $m$-a.e.\ $y$ in $G$.  Its integrated
form is given by $\lambda:L^1(G)\to\fB(L^2(G))$, $\lambda(f)h=f\ast h$.

We recall that $G$ is {\it amenable} if $L^\infty(G)$ admits a left invariant mean, a linear functional $\mu$
such that $\mu(\varphi)\geq 0$ provided $\varphi\geq 0$, $\mu(1)=1$, and $\mu(x\ast\varphi)=\mu(\varphi)$ 
for all $x$ in $G$ and $\varphi$ in $L^\infty(G)$.   This is well-known to be equivalent to 
Reiter's condition (P$_1$), i.e.\ the existence of a {\it Reiter net} $(r_\alpha)$  in $L^1(G)$: 
each $r_\alpha\geq 0$ $m$-a.e., $\int_Gr_\alpha\,dm=1$ and $\lim_\alpha\|x\ast r_\alpha-r_\alpha\|_1=0$ 
uniformly for $x$ in compact sets.  See \S3.2 of the classic book of Greenleaf \cite{greenleaf}.

\begin{theorem}\label{theo:hulanicki}
{\rm (Hulanicki \cite{hulanicki,hulanicki1})}  A locally compact group $G$ is amenable if and only if
for any unitary representation $\pi:G\to\fU(\fH)$, we have $\|\lambda(f)\|\geq \|\pi(f)\|$ for
all $f$ in $L^1(G)$.
\end{theorem}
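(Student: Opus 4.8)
The plan is to prove the two implications by different means: Reiter's condition together with Fell absorption for the forward direction, and a multiplicative-domain argument for the converse.

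\textbf{(Amenable $\Rightarrow$ norm inequality.)} First I would convert a Reiter net $(r_\alpha)$ into a net of almost-invariant unit vectors in $L^2(G)$ by the square-root trick: setting $\xi_\alpha=r_\alpha^{1/2}$, the elementary estimate $|\sqrt a-\sqrt b|^2\le|a-b|$ gives $\|\lambda(x)\xi_\alpha-\xi_\alpha\|_2^2\le\|x\ast r_\alpha-r_\alpha\|_1$, so $\langle\lambda(x)\xi_\alpha,\xi_\alpha\rangle\to1$ uniformly on compact sets. Next I would invoke Fell's absorption principle: the unitary $W$ on $L^2(G)\otimes\fH\cong L^2(G,\fH)$ given by $(W\zeta)(x)=\pi(x)^{-1}\zeta(x)$ satisfies $W(\lambda(x)\otimes\pi(x))W^*=\lambda(x)\otimes1$, whence $\|(\lambda\otimes\pi)(f)\|=\|\lambda(f)\|$ for every $f$. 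Finally, for unit vectors $\eta,\eta'\in\fH$ I would compute the matrix coefficient
\[
\langle(\lambda\otimes\pi)(f)(\xi_\alpha\otimes\eta),\,\xi_\alpha\otimes\eta'\rangle=\int_G f(x)\langle\lambda(x)\xi_\alpha,\xi_\alpha\rangle\langle\pi(x)\eta,\eta'\rangle\,dx,
\]
which, by the uniform convergence above (first for $f\in C_c(G)$, then by density in $L^1$), tends to $\langle\pi(f)\eta,\eta'\rangle$. As the left side is bounded by $\|(\lambda\otimes\pi)(f)\|=\|\lambda(f)\|$ and $\eta,\eta'$ range over unit vectors, taking the supremum yields $\|\pi(f)\|\le\|\lambda(f)\|$. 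Using off-diagonal coefficients with the \emph{same} $\xi_\alpha$ on both sides, rather than diagonal ones, is exactly what recovers the operator norm instead of merely the numerical radius.

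\textbf{(Norm inequality $\Rightarrow$ amenable.)} The hypothesis says precisely that the full and reduced $C^*$-norms coincide, so the character $f\mapsto\int_G f\,dm$ afforded by the trivial representation descends to a character $\epsilon$ of $C^*_\lambda(G)$; extending $\epsilon$ to the multiplier algebra and using $\lambda(x)\lambda(f)=\lambda(x\ast f)$ forces $\epsilon(\lambda(x))=1$ for all $x$. I would then extend $\epsilon$, now only as a state (by the usual Hahn--Banach extension of states on $\fB(L^2(G))$), to a state $\tilde\epsilon$ on $\fB(L^2(G))$ still satisfying $\tilde\epsilon(\lambda(x))=1$. Since each $\lambda(x)$ is unitary with $\tilde\epsilon(\lambda(x))=1$, the Cauchy--Schwarz equality condition places every $\lambda(x)$ in the multiplicative domain of $\tilde\epsilon$, giving $\tilde\epsilon(\lambda(x)T)=\tilde\epsilon(T)=\tilde\epsilon(T\lambda(x))$ for all $T$. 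Setting $\mu(\varphi)=\tilde\epsilon(M_\varphi)$ for multiplication operators $M_\varphi$, $\varphi\in L^\infty(G)$, produces a mean; and because $M_{x\ast\varphi}=\lambda(x)M_\varphi\lambda(x)^*$, the multiplicative-domain relations collapse $\mu(x\ast\varphi)$ to $\mu(\varphi)$. Thus $\mu$ is a left invariant mean and $G$ is amenable.

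I expect the crux to be the converse. The delicate points there are verifying $\epsilon(\lambda(x))=1$ via the multiplier extension and, above all, the observation that a state attaining modulus one on a unitary is forced to treat that unitary multiplicatively; this is the precise mechanism that turns the algebraic coincidence of $C^*(G)$ and $C^*_\lambda(G)$ into the \emph{invariance} of the mean. The forward direction is comparatively routine once the square-root trick and Fell absorption are assembled, the only real care being the passage from matrix coefficients to the operator norm.
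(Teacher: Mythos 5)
Your proposal is correct, and its two halves compare differently with the paper. The converse (norm inequality $\Rightarrow$ amenable) is essentially the paper's argument: where you descend the trivial character to $C^*_\lambda$, pass to the multiplier algebra to get $\tilde\epsilon(\lambda(x))=1$, and Hahn--Banach extend to a state on $\fB(L^2(G))$, the paper does the same thing concretely through the measure algebra, bounding $|\tilde{\alpha}(\mu)|=|\alpha(\mu\ast f)|\leq\|\lambda(\mu\ast f)\|\leq\|\lambda(\mu)\|$ for a fixed $f\geq 0$ with $\int_G f\,dm=1$, so that the character is already defined on the unital algebra $M_\lambda^*=\overline{\lambda(M(G))}$, which contains each $\lambda(x)=\lambda(\delta_x)$; and your observation that a state taking the value $1$ at a unitary treats that unitary multiplicatively is exactly the case of Proposition \ref{prop:choi} that gets used, proved by the same GNS/Cauchy--Schwarz computation. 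Your delicate points are handled correctly: characters extend to multiplier algebras because they are nondegenerate $*$-homomorphisms, and the Hahn--Banach extension is a state because the multiplier algebra, realized as the idealizer of $C^*_\lambda$ in $\fB(L^2(G))$, contains $I$, so a norm-one extension taking the value $1$ at $I$ is positive. The forward direction, however, is genuinely different. The paper also starts from the square-root trick on a Reiter net, but then invokes Godement's theorem (every compactly supported positive definite function has the form $\langle\lambda(\cdot)h,h\rangle$): multiplying $\langle\pi(\cdot)\xi,\xi\rangle$ by the compactly supported positive definite functions $u_\alpha$ produces coefficients $\langle\lambda(\cdot)h_\alpha,h_\alpha\rangle$ of the regular representation, against which one estimates $\int_G f^*\ast f(x)\langle\pi(x)\xi,\xi\rangle\,dx$ directly. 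Your route through Fell absorption, $W(\lambda(x)\otimes\pi(x))W^*=\lambda(x)\otimes 1$ followed by off-diagonal coefficients against $\xi_\alpha\otimes\eta$ and $\xi_\alpha\otimes\eta'$, replaces the appeal to Godement's nontrivial theorem by an elementary unitary conjugation, so it is more self-contained; what the paper's version buys is that it never leaves the setting of positive definite functions on $G$ (no tensor products or vector-valued $L^2$-spaces). In effect you have transplanted both halves of Theorem 2.6.8 of Brown--Ozawa from discrete to locally compact groups, whereas the paper borrows only the sufficiency half from that source.
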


The necessity condition above is the property that $\lambda$ {\it weakly contains} all
unitary representations.  This is equaivalent to having, for every $\pi$, the existence of a representation
$\pi_\lambda:C^*_\lambda\to C^*_\pi$ for which $\pi_\lambda\circ\lambda=\pi$ on $L^1(G)$,
where $C^*_\pi=\overline{\pi(L^1(G))}$ (norm closure in $\fB(\fH)$).  None of the elements of
this direction of the proof are novel, but are combined in a manner which does not seem to appear
in the existant literature.

\begin{proof}[Proof of necessity]
If $(r_\alpha)$ is a Reiter net in $L^1(G)$,  
then $k_\alpha=r_\alpha^{1/2}$ defines a net in $L^2(G)$ such that
matrix coefficients $\langle \pi(\cdot)k_\alpha,k_\alpha\rangle$ tend uniformly on compacta to $1$.
(This is the easy direction of Theorem 3.5.2 in \cite{greenleaf}.)  Since
compactly supported elements are dense in $L^2(G)$, we conclude that amenablity of $G$ is
entails having a net $(u_\alpha)$ of compactly supported positive definite elements
which converges uniformly of compacta to $1$.  A theorem of Godement \cite{godement}
(see 13.8.6 the book of Dixmier \cite{dixmier}) shows that any compactly supported positive definite
function on $G$ is of the form $\langle \pi(\cdot)h,h\rangle$ for some $h$ in $L^2(G)$. 

Let $f\in L^1(G)$.  By density of such elements, we may assume that $f$ is compactly supported. 
Given a unitary representation $\pi$ and $\varepsilon>0$, let $\xi$ be a unit vector in $\fH$ for which
$\|\pi(f)\xi\|^2+\varepsilon>\|\pi(f)\|^2$.  Then given the net $(u_\alpha)$, promised above, we find
elements $h_\alpha$ in $L^2(G)$ for which
\[
\langle \lambda(\cdot)h_\alpha,h_\alpha\rangle=\langle\pi(\cdot)\xi,\xi\rangle u_\alpha
\overset{\alpha}{\longrightarrow}\langle\pi(\cdot)\xi,\xi\rangle\text{ uniformly on compacta}
\]
so we compute
\begin{align*}
\|\pi(f)\|^2-\varepsilon&<\int_G f^*\ast f(x)\langle\pi(x)\xi,\xi\rangle\,dx \\
&=\lim_\alpha \int_G f^*\ast f(x)\langle\pi(x)\xi,\xi\rangle u_\alpha(x)\,dx \\
&=\lim_\alpha  \int_G f^*\ast f(x)\langle \lambda(x)h_\alpha,h_\alpha\rangle\,dx \\
&\leq\lim_\alpha \|\lambda(f)\|^2\langle\lambda(e)h_\alpha,h_\alpha\rangle=\|\lambda(f)\|^2
\end{align*}
which establishes the desired inequality.
\end{proof}

To prove the sufficiency condition, we shall use a specialization of a {\it multiplicative domain}
result of Choi \cite{choi} to states.  We recall that a state on a unital C*-algebra $\fB$ is any
functional $\tau$ which satisfies $\tau(B^*B)\geq 0$ for $B$ in $\fB$ and $\tau(I)=1$.

\begin{proposition}\label{prop:choi}
Let $\fB$ be a unital C*-algebra and $\fM$ a unital C*-subalgebra of $\fB$.  Suppose a state
$\tau$ on $\fB$ satisfies $\tau(A^*A)=|\tau(A)|^2$ for $A$ in $\fM$.  Then $\tau$ satisfies
$\tau(AB)=\tau(A)\tau(B)=\tau(BA)$ for $A$ in $\fM$ and $B$ in $\fB$.
\end{proposition}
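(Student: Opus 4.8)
The plan is to pass to the GNS representation of $\tau$ and recognize the hypothesis as the equality case of the Cauchy--Schwarz inequality. First I would form the GNS triple $(\pi_\tau,\fH_\tau,\Omega)$ associated to $\tau$, where $\Omega$ is a cyclic vector implementing $\tau(B)=\langle\pi_\tau(B)\Omega,\Omega\rangle$ for all $B$ in $\fB$; since $\tau(I)=1$ we have $\|\Omega\|=1$.

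The key step is to observe that for $A$ in $\fM$ the hypothesis $\tau(A^*A)=|\tau(A)|^2$ reads
\[
\|\pi_\tau(A)\Omega\|^2=|\langle\pi_\tau(A)\Omega,\Omega\rangle|^2,
\]
which is exactly equality in the Cauchy--Schwarz inequality for the pair $\pi_\tau(A)\Omega$ and $\Omega$. As $\|\Omega\|=1$, equality forces $\pi_\tau(A)\Omega$ to be a scalar multiple of $\Omega$, and pairing against $\Omega$ identifies the scalar as $\tau(A)$. Thus
\[
\pi_\tau(A)\Omega=\tau(A)\Omega\qquad\text{for all }A\text{ in }\fM.
\]
This is the heart of the matter, and it is where the multiplicative-domain phenomenon enters: $\Omega$ is a common eigenvector for the family $\pi_\tau(\fM)$.

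Because $\fM$ is a $*$-subalgebra, $A^*$ lies in $\fM$ whenever $A$ does, so applying the eigenvector relation to $A^*$ together with $\pi_\tau(A)^*=\pi_\tau(A^*)$ gives $\pi_\tau(A)^*\Omega=\overline{\tau(A)}\,\Omega$. With both relations in hand the conclusion is a short computation: for $B$ in $\fB$,
\[
\tau(AB)=\langle\pi_\tau(B)\Omega,\pi_\tau(A)^*\Omega\rangle=\tau(A)\langle\pi_\tau(B)\Omega,\Omega\rangle=\tau(A)\tau(B),
\]
and likewise
\[
\tau(BA)=\langle\pi_\tau(B)\pi_\tau(A)\Omega,\Omega\rangle=\tau(A)\langle\pi_\tau(B)\Omega,\Omega\rangle=\tau(A)\tau(B).
\]

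I do not anticipate a serious obstacle; the only point demanding care is that the equality case of Cauchy--Schwarz must be invoked in the completed GNS Hilbert space, obtained after quotienting the null space of the semi-inner product $\langle A,B\rangle=\tau(B^*A)$, so that genuine linear dependence of the two vectors is available. Everything else reduces to the $*$-homomorphism property of $\pi_\tau$ and the defining relation $\tau=\langle\pi_\tau(\cdot)\Omega,\Omega\rangle$.
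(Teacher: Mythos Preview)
Your proof is correct and follows essentially the same approach as the paper: both pass to the GNS triple, show the cyclic vector is an eigenvector for $\pi(\fM)$, and then read off $\tau(AB)$ and $\tau(BA)$ from the inner-product realization of $\tau$. The only cosmetic difference is that the paper phrases the key step as $\|(I-P)\pi(A)\xi\|^2=\tau(A^*A)-|\tau(A)|^2=0$ with $P$ the rank-one projection onto $\Cee\xi$, whereas you invoke the equality case of Cauchy--Schwarz; these are the same observation.
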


\begin{proof}
Let $(\fH,\pi,\xi)$ be the Gelfand-Naimark-Segal triple associated with $\tau$,
\i.e.\ $\tau=\langle\pi(\cdot)\xi,\xi\rangle$.
Let $P=\langle \cdot,\xi\rangle\xi$, so $P$ is the othogonal projection onto
$\Cee\xi$, and $P\pi(B)\xi=\tau(B)\xi$ for $B$ in $\fB$.  Then, if $A\in\fM$ we have
\[
\|(I-P)\pi(A)\xi\|^2=\langle (I-P)\pi(A)\xi|\pi(A)\xi\rangle=\tau(A^*A)-\tau(A)\overline{\tau(A)}=0.
\]
Hence $\Cee\xi$ is $\pi(\fM)$-invariant, and it follows that $\pi(A)\xi=\tau(A)\xi$ for $A$ in $\fM$.
Hence, if $B\in\fB$ we have
\[
\tau(AB)=\langle \pi(B)\xi,\pi(A^*)\xi\rangle =\tau(A)\tau(B)
\]
and, similarly, $\tau(BA)=\tau(A)\tau(B)$.
\end{proof}

The technique below has been observed for discrete groups in the book of Brown and 
Ozawa \cite{browno}.  The author is unsure of the origin of this trick for the purposes of this theorem.

\begin{proof}[Proof of sufficiency condition of Theorem \ref{theo:hulanicki}]
Let $M(G)$ be the measure algebra of $G$, in which $L^1(G)$ is the ideal of elements
absolutely continuous with respect to $m$.  Consider the augmentation characters
\begin{align*}
&\alpha:L^1(G)\to\Cee,\; \alpha(f)=\int_G f\,dm \\ 
&\tilde{\alpha}:M(G)\to\Cee,\;\tilde{\alpha}(\mu)=\mu(G)=\int_G 1\,d\mu
\end{align*}
which are $*$-homomorphisms with $\tilde{\alpha}|_{L^1(G)}=\alpha$.  Hence, our assumptions entail that
\[
|\alpha(f)|\leq\|\lambda(f)\|\text{ for }f\text{ in }L^1(G).
\]
Now, fix $f$ in $L^1(G)$ with $f\geq 0$ $m$-a.e.\ and $\alpha(f)=1$, so $\|f\|_1=1$.  
Then if $\mu\in M(G)$ we have
\[
|\tilde{\alpha}(\mu)|=|\alpha(\mu\ast f)|\leq \|\lambda(\mu\ast f)\|\leq\|\lambda(\mu)\|
\]
where $\lambda(\mu)h=\mu\ast h$ for $h$ in $L^2(G)$.  Hence $\tilde{\alpha}$ extends
to a multiplicative functional $\tau$ on $M_\lambda^*=\overline{\lambda(M(G))}$, satisfying
$\tau(\lambda(\mu))=\tilde{\alpha}(\mu)$.  This is clearly a state which satisfies
$\tau(A^*A)=|\tau(A)|^2$ on $M_\lambda^*$.
Let $\tilde{\tau}$ denote any norm perserving extension to $\fB(L^2(G))$.

We let $M:L^\infty(G)\to\fB(L^2(G))$ denote the $*$-homomorphism into multiplication operators:
$M(\varphi)h=\varphi h$.  It is well known, and standard to verify, that $\lambda(x)M(\varphi)\lambda(x^{-1})
=M(x\ast\varphi)$.  Since $\lambda(x)=\lambda(\delta_x)$ (Dirac measure at $x$), we see 
from Proposition \ref{prop:choi} that
\[
\tilde{\tau}\circ M(x\ast\varphi)=\tilde{\tau}(\lambda(\delta_x)M(\varphi)\lambda(\delta_{x^{-1}}))
=\tilde{\alpha}(\delta_x)\tilde{\tau}\circ M(\varphi)\tilde{\alpha}(_{x^{-1}})=\tilde{\tau}\circ M(\varphi).
\]
Hence $\mu=\tilde{\tau}\circ M$ is a left invariant mean on $L^\infty(G)$.
\end{proof}

The only claim of originality made by the author is the transferring of the multiplicative domain
technique of Theorem 2.6.8 of \cite{browno} to $M_\lambda^*$ in the proof of sufficiency.  



\end{document}